\documentclass[a4paper,11pt]{article}
\usepackage[T1]{fontenc}
\usepackage{lmodern}
\usepackage[margin=25mm]{geometry}
\usepackage{amsmath,amssymb,amsthm,needspace}
\usepackage{microtype}
\usepackage[hidelinks]{hyperref}

\theoremstyle{definition}
\newtheorem{definition}{Definition}[section]
\theoremstyle{plain}
\newtheorem{lemma}[definition]{Lemma}
\newtheorem{theorem}[definition]{Theorem}
\newtheorem{corollary}[definition]{Corollary}
\newcommand{\Sym}{\operatorname{Sym}}
\newcommand{\cl}{\operatorname{cl}}
\hypersetup{pdftitle={The Vamos Matroid Has No Second Symmetric Power},pdfauthor={Yasuhito Nakajima}}
\title{The V\'amos Matroid Has No Second Symmetric Power}
\author{Yasuhito Nakajima}

\date{}

\begin{document}
\maketitle

\begin{abstract}
Symmetric powers of matroids were introduced by Lov\'asz and Mason.
The subject has recently attracted renewed interest through its connection
with the realizability of tropical linear spaces by tropical ideals.
Anderson proved that the tropical linear space associated with a matroid
$M$ is the variety of a tropical ideal if and only if $M$ has a
$d$th symmetric power for every positive integer $d$.
He also asked whether the V\'amos matroid $V_8$ has a second symmetric
power. We answer this question in the negative by proving that $V_8$ has
no second symmetric power. By Anderson's characterization, the associated
tropical linear space cannot be realized as the variety of a tropical ideal.
\end{abstract}

\section{Introduction}\label{sec:introduction}

Symmetric powers of matroids were introduced by Lov\'asz and
Mason~\cite{Lovasz,Mason}. 
More recently,
connections with tropical geometry have provided new motivation for
studying the existence of symmetric powers.
Maclagan and Rinc\'on introduced tropical ideals, which include
tropicalizations of classical ideals~\cite{MaclaganRincon}.
They subsequently proved that the top-dimensional part of the variety of a tropical ideal, equipped with suitable weights, is a balanced polyhedral complex~\cite[Theorem 1.2]{MaclaganRinconBalanced}.
The Bergman fan of a matroid satisfies the balancing condition when each
maximal cone is assigned weight one. Throughout this paper,
we call this weighted fan the tropical linear space associated
with the matroid. When comparing such a tropical linear space with
the variety of a tropical ideal, we require equality of the weighted
polyhedral complexes, after passing to a common refinement if necessary.
Draisma and Rinc\'on connected tropical ideals with matroid products and
proved that the tropical linear space associated with
$V_8\oplus U_{2,3}$, the direct sum of the V\'amos matroid and the
rank-two uniform matroid on three elements, is not the variety of a
tropical ideal~\cite[Theorem~5.2]{DraismaRincon}.
They left open whether the tropical linear space associated with $V_8$
 can be realized as the variety of a tropical ideal.

Anderson clarified the relationship between realizability by tropical
ideals and the existence of symmetric powers~\cite{Anderson}.
A $d$th symmetric power of a matroid $M$ with ground set $E(M)$ is a
matroid on $\Sym^d(E(M))$ satisfying compatibility conditions with
symmetric powers of lower degree. Anderson proved that the associated
tropical linear space $\operatorname{trop}(M)$ is the variety of a
tropical ideal if and only
if $M$ has a $d$th symmetric power for every positive integer
$d$~\cite[Theorem~1.4]{Anderson}.
Thus, nonexistence of a symmetric power in a single degree implies
that the corresponding tropical linear space cannot be realized as the
variety of a tropical ideal.
In this context, Anderson asked whether the V\'amos matroid has a
second symmetric power~\cite[Question~2.28]{Anderson}.

\Needspace{5\baselineskip}
Our main result gives a negative answer to this question.
\begin{theorem}\label{thm:main}
The V\'amos matroid $V_8$ has no second symmetric power.
\end{theorem}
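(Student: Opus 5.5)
We argue by contradiction: assume $N$ is a second symmetric power of $V_8$ on $\Sym^2(E)$, where $E=\{a,a',b,b',c,c',d,d'\}$. We take the usual presentation of $V_8$: it has rank $4$, and its only non-spanning $4$-sets are the five circuit-hyperplanes
\[
\{a,a',b,b'\},\ \{a,a',c,c'\},\ \{a,a',d,d'\},\ \{b,b',c,c'\},\ \{b,b',d,d'\}.
\]
The set $\{c,c',d,d'\}$ is a basis, and this is exactly the failure of the "bundle" or Bezout-type closure that holds in realizable matroids. Among the compatibility conditions in Anderson's definition we will use only those that tie $N$ to $V_8$ through multiplication maps. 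For each $e\in E$ and each flat $F$ of $V_8$, the set $eF=\{ef: f\in F\}$ should satisfy $r_N(eF)=r_{V_8}(F)$. Products of flats should also behave like "degree-two parts" of the ideals they generate: if $F$ is a flat of $V_8$, then the set $E\cdot F$ of degree-two monomials divisible by an element of $F$ should be a flat of $N$ of corank $\binom{r-r(F)+1}{2}$. Here $r=4$, so $N$ has rank $10$. These are the rank formulas one would get from $\Sym^2$ of a linear space and its quotients.

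The plan is to imitate, inside $N$, the classical proof that the Vámos configuration is not realizable. In that proof one works with the planes $P_{ab}=\{a,a',b,b'\}$ and their pairwise intersections, namely the lines $\ell_a=\cl\{a,a'\}$ and $\ell_b=\cl\{b,b'\}$. One then shows that the four lines $\ell_a,\ell_b,\ell_c,\ell_d$ pairwise meet except for $\ell_c,\ell_d$, and derives a contradiction. Since no "meet points" exist in the matroid, we will let $\Sym^2$ supply them.

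Consider the degree-two sets $X_{ab}=\{a,a'\}\cdot\{b,b'\}=\{ab,ab',a'b,a'b'\}$. Because $\{a,a',b,b'\}$ is a circuit-hyperplane, compatibility should force the span of $X_{ab}$ in $N$ to be small. In a realization, the products $\ell_a\cdot\ell_b$ span $4$ dimensions, and the relation in the plane $P_{ab}$ makes them share a specific element with $\ell_a^2$ and $\ell_b^2$. Concretely, we expect $r_N(\{a,a'\}^2\cup\{b,b'\}^2)\le 5$ rather than $6$, because the unique point $\ell_a\cap\ell_b$ gives a common square. We will derive this inequality in $N$ purely by submodularity: use $r_N(\Sym^2 P_{ab})=\binom{4}{2}=6$ (the degree-two part of a rank-$3$ flat), together with the rank formulas for $x\cdot P_{ab}$.

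Doing this for the five circuit-hyperplanes yields rank deficiencies among the sets $S_x=\Sym^2\{x,x'\}$ for the pairs
\[
(a,b),\ (a,c),\ (a,d),\ (b,c),\ (b,d),
\]
but not for $(c,d)$. The final step is a submodularity chain inside $N$ that mirrors the projective argument. Combine the deficiencies around the "cycles" $a$–$c$–$b$–$d$ to force $r_N(S_c\cup S_d)\le 5$. Then apply the compatibility with $V_8$ in reverse: because $\{c,c',d,d'\}$ is a basis, $\Sym^2$ of it must have rank $10$, so $r_N(S_c\cup S_d)=6$. This is a contradiction.

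The main obstacle is this last chain. Submodularity alone failed to prove non-representability of $V_8$ itself, because the Ingleton inequality is not a consequence of submodularity. Our hope is that the extra degree-two elements play the role of auxiliary "common information" variables. We would first search, with a small LP over the relevant subsets of $\Sym^2(E)$, for a certificate: a nonnegative combination of submodular inequalities and compatibility equalities whose sum is $0<0$. Once we have found it, we would transcribe it by hand as a sequence of lemmas.
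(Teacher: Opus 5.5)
Your proposal is a plan rather than a proof. The decisive submodularity chain is never written down; it is deferred to an LP search that has not been carried out, so nothing in the proposal actually excludes $N$.

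The one intermediate inequality you commit to also does not follow from the mechanism you describe. You expect each circuit-hyperplane $P_{ab}$ to force $r_N(S_a\cup S_b)\le5$, "purely by submodularity" from $r_N(\Sym^2P_{ab})=6$ and the rank formulas for $xP_{ab}$. In other words, you expect the missing point $\ell_a\cap\ell_b$ to reappear as a common square. It need not. Put $C_i=i\{1,2,3,4\}$ and let $N_0$ be the matroid on $\Sym^2\{1,2,3,4\}$ with rank function
\[
\rho_{N_0}(S)=\min\bigl\{|S|,\ 6,\ 3+|S\setminus C_i|,\ 5+|S\setminus(C_i\cup C_j)|\bigr\},
\]
where the minimum runs over all $i$ and all $i\ne j$.
\begin{itemize}
\item Its cyclic flats are $\varnothing$, the $C_i$, the $C_i\cup C_j$, and the whole set, and the Bonin--de~Mier axioms for cyclic flats are readily checked.
\item Each $C_i$ is a copy of $U_{3,4}$ under $j\mapsto ij$, and $N_0$ has rank $6$. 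So $N_0$ is a second symmetric power of $U_{3,4}$.
\item Nevertheless $\{11,12,22,33,34,44\}$ is a basis of $N_0$.
\item Adding a copy of $U_{3,4}\oplus U_{1,1}$ on $5\{1,\dots,5\}$ as a direct summand gives a second symmetric power of the rank-four matroid $U_{3,4}\oplus U_{1,1}$. In that matroid $\{1,2,3,4\}$ is a circuit-hyperplane, and the same six diagonal elements are independent.
\end{itemize}
So the five "deficiencies" cannot be extracted one circuit-hyperplane at a time. Even if they were available, the step forcing $r_N(S_c\cup S_d)\le5$ is the entire content of the theorem, and you give nothing for it. Your closing step is fine: $\Sym^2(C\cup D)$ is independent because $C\cup D$ is a basis.

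Two further points.

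First, the rank formulas you list as compatibility conditions, $r_N(\Sym^2F)=\binom{r(F)+1}{2}$ and the corank of $E\cdot F$, are not part of the definition. They must be derived, using Anderson's result that $\Sym^2(K)$ is a basis of $N$ for every basis $K$ of $V_8$, together with $x\in\cl_M(Y)\Rightarrow ix\in\cl_N(iY)$ (Lemma~\ref{lem:ranks}). Your flatness claim is not needed.

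Second, a certificate of the kind you hope for does exist, but it avoids diagonal squares. It works with mixed products and the sets $F_Y=YE$. Write $A=\{a,a'\}$, $B=\{b,b'\}$, and so on.
\begin{itemize}
\item The sets $F_{A\cup B}\cap F_{A\cup C}$ and $F_{A\cup B}\cap F_{B\cup C}$ each have rank at most $9+9-10=8$. Their intersection is $T=AB\cup AC\cup BC$ and their union is $F_{A\cup B}$, so $\rho_N(T)\le7$.
\item The same argument with $A\cup D$ in place of $A\cup C$ gives $\rho_N(AB\cup AC\cup BD)\le7$.
\item Set $X=AB\cup AC\cup\Sym^2(D)$. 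The basis $C\cup D$ gives $\rho_N(F_C\cup\Sym^2(D))=10$. Together with $\rho_N(C(A\cup B))=\rho_N(A(B\cup C))=6$ (Lemmas~\ref{lem:rows} and~\ref{lem:products}), this yields $\rho_N(X)\ge8$.
\item On the other side, $\rho_N(F_A\cup\Sym^2(D))\le8$ (from $A\cup D$) and $\rho_N(AB\cup AC\cup BD\cup\Sym^2(B\cup D))\le9$ (from $B\cup D$). Since $A\cup B\cup D$ spans, this yields $\rho_N(X)\le7$, a contradiction.
\end{itemize}
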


Anderson's characterization immediately implies the following.
\begin{corollary}\label{cor:tropical}
The tropical linear space associated with
the V\'amos matroid cannot be realized as the variety of any tropical ideal.
\end{corollary}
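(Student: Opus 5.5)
The corollary follows from Theorem~\ref{thm:main} combined with Anderson's characterization~\cite[Theorem~1.4]{Anderson}. I will argue by contradiction. Suppose the tropical linear space $\operatorname{trop}(V_8)$ associated with the V\'amos matroid is the variety of some tropical ideal $I$. The comparison is made in the sense fixed in the introduction: equality of weighted polyhedral complexes, with the Bergman fan of $V_8$ given weight one on every maximal cone, after passing to a common refinement if needed. Anderson's theorem states that $\operatorname{trop}(M)$ is the variety of a tropical ideal if and only if $M$ has a $d$th symmetric power for every positive integer $d$. Applying the forward direction with $M = V_8$ shows that $V_8$ has a $d$th symmetric power for every $d \ge 1$, and in particular for $d = 2$.

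This contradicts Theorem~\ref{thm:main}, which says that $V_8$ has no second symmetric power. So no such tropical ideal $I$ exists, which is exactly the corollary. Only the degree-two case is used: a single missing symmetric power already breaks the ``for every $d$'' condition in Anderson's theorem.

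The only step needing care, and so the main (mild) obstacle, is checking that the notion of ``realized as the variety of a tropical ideal'' in the corollary matches the one in Anderson's Theorem~1.4. This means confirming three things:
\begin{itemize}
\item the weights are the same: weight one on each maximal cone of the Bergman fan, matching the weights from Maclagan--Rinc\'on~\cite{MaclaganRinconBalanced} on the top-dimensional part of the variety;
\item equality up to common refinement is the equivalence Anderson uses;
\item the same symmetric-power definition (compatibility with lower degrees) is used in Theorem~\ref{thm:main} and in Anderson's paper.
\end{itemize}
The introduction fixes these conventions to agree with Anderson's, so once this is noted, the contradiction above completes the proof.
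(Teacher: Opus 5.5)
Your proposal is correct and is essentially the paper's argument. The paper also combines Theorem~\ref{thm:main} with the ``only if'' direction of Anderson's Theorem~1.4: a tropical-ideal realization of the tropical linear space of $V_8$ would force a second symmetric power, and your remarks on matching conventions are exactly what the introduction fixes so the deduction is immediate.
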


We prove the main theorem by contradiction. Assuming that a second
symmetric power of $V_8$ exists, we obtain rank constraints from its
definition. These constraints yield incompatible rank bounds
for a particular subset $X$ of $\Sym^2(E(V_8))$.

The paper is organized as follows.
Section~\ref{sec:preliminaries} recalls the matroid terminology used in the
proof. Section~\ref{sec:main-proof} contains the proof of the main result.
In that section, we first describe the V\'amos matroid, introduce product notation on
$\Sym^2(E)$, and define second symmetric powers. We then establish the
required rank bounds and apply them to the four-block configuration of
$V_8$ to prove Theorem~\ref{thm:main}.

\paragraph{Use of AI.}
After proving the preliminary results up to and including
Lemma 3.5, the author used ChatGPT 6 Astra to search for subsets
that would yield a contradiction in the proof of the main theorem.
The author also used suggestions from this model when formulating the
statements and proofs from Lemma 3.6 onward.
The author independently checked the mathematical content of these
suggestions and revised the arguments as necessary.
ChatGPT 6 Astra was also used to translate the manuscript from Japanese
into English. ChatGPT 6 Astra and ChatGPT 6 Pro were used  to improve the clarity of the English exposition.
The author has verified all definitions, statements, proofs, and
conclusions in this paper and takes full responsibility for the entire
content of the paper, including its mathematical correctness.

\section{Matroid terminology and basic properties}\label{sec:preliminaries}
For standard terminology and basic results on matroids, we refer to
Oxley~\cite{Oxley}.
All sets considered in this paper are finite.
We write $2^E$ for the set of all subsets of $E$ and $|Y|$ for the
cardinality of a set $Y$.

\begin{definition}
A \emph{matroid} $M$ on a set $E$ is specified by an integer-valued
function $\rho_M:2^E\to\mathbb Z_{\ge0}$ satisfying
\begin{align*}
0\le\rho_M(Y)&\le |Y| &&(Y\subseteq E),\\
\rho_M(Y)&\le\rho_M(Z) &&(Y\subseteq Z\subseteq E),\\
\rho_M(Y)+\rho_M(Z)&\ge
\rho_M(Y\cup Z)+\rho_M(Y\cap Z) &&(Y,Z\subseteq E).
\end{align*}
The set $E$ is the \emph{ground set}, the function $\rho_M$ is the
\emph{rank function}, and $\rho_M(Y)$ is the \emph{rank} of $Y$.
The rank of $M$ is $\rho_M(E)$.
The second and third axioms are called \emph{monotonicity} and
\emph{submodularity}, respectively.
\end{definition}

\begin{definition}
A subset $I\subseteq E$ is \emph{independent} if $\rho_M(I)=|I|$,
and \emph{dependent} otherwise.
A maximal independent set is a \emph{basis} of $M$.
Every basis has cardinality $\rho_M(E)$.
A minimal dependent set is a \emph{circuit}; equivalently,
a circuit is dependent and all its proper subsets are independent.
\end{definition}

\begin{definition}
The \emph{closure} of $Y\subseteq E$ is
\[
\cl_M(Y):=\{e\in E:\rho_M(Y\cup\{e\})=\rho_M(Y)\}.
\]
We say that $Y$ \emph{spans} $Z$ if $Z\subseteq\cl_M(Y)$.
In particular, $Y$ spans $M$ if $\cl_M(Y)=E$.
A subset $F$ is a \emph{flat} if $F=\cl_M(F)$.
A flat of rank $\rho_M(E)-1$ is a \emph{hyperplane}.
A set that is both a circuit and a hyperplane is a \emph{circuit-hyperplane}.
\end{definition}

\begin{definition}
For $Y\subseteq E$, the \emph{restriction} of $M$ to $Y$, denoted
$M|_Y$, is the matroid on $Y$ whose rank function is
$Z\mapsto\rho_M(Z)$ for $Z\subseteq Y$.
Its bases are the independent subsets of $Y$ that span $Y$.
An \emph{isomorphism} of matroids is a bijection between their ground
sets that preserves the ranks of all subsets.

An element $e$ is a \emph{loop} if $\rho_M(\{e\})=0$.
Distinct nonloops $e,f$ are \emph{parallel} if
$\rho_M(\{e,f\})=1$.
A matroid is \emph{simple} if it has no loops and no pair of parallel elements.
\end{definition}

The rank axioms imply the following properties of closure:
\[
\begin{gathered}
Y\subseteq\cl_M(Y),\qquad
Y\subseteq Z\ \Longrightarrow\ \cl_M(Y)\subseteq\cl_M(Z),\\
\cl_M(\cl_M(Y))=\cl_M(Y),\qquad
\rho_M(\cl_M(Y))=\rho_M(Y).
\end{gathered}
\]
Every independent set of a matroid extends to a basis.
In particular, if $I\subseteq Y$ is independent and spans $Y$, then
$\rho_M(Y)=|I|$.

\section{Proof of the main result}\label{sec:main-proof}
\subsection{The V\'amos matroid and product notation}\label{sec:vamos}

\begin{definition}
The following presentation agrees, up to relabeling, with that in
Oxley~\cite[Example~2.1.25]{Oxley}.
Let
\[
E=\{1,2,3,4,5,6,7,8\},\qquad
A=\{1,2\},\ B=\{3,4\},\ C=\{5,6\},\ D=\{7,8\},
\]
and set
\[
\mathcal H=\{A\cup B,\ A\cup C,\ A\cup D,\ B\cup C,\ B\cup D\}.
\]
The \emph{V\'amos matroid} $V_8$ is the rank-four matroid on $E$
whose bases are all four-element subsets of $E$ except the five sets
in $\mathcal H$. Its rank function is
\[
\rho_{V_8}(Y)=
\begin{cases}
3,&Y\in\mathcal H,\\
\min\{|Y|,4\},&Y\notin\mathcal H.
\end{cases}
\]
\end{definition}

The matroid $V_8$ is simple, and every three-element subset is independent.
Each member of $\mathcal H$ is a circuit-hyperplane, whereas $C\cup D$
is a basis. The union of any three or more blocks, such as $A\cup B\cup D$,
has rank four. 
For the remainder of the paper, let
$E=\{1,2,\ldots,8\}$
be the ground set of $M=V_8$.

\begin{definition}
The \emph{second symmetric power} of a set $E$, denoted $\Sym^2(E)$,
is the set of unordered pairs of elements of $E$, with repetition allowed.
We denote these pairs by $ij$, with $ij=ji$, and allow $i=j$.
In particular, each diagonal pair $ii$ is an element of $\Sym^2(E)$.
Thus, in our setting,
\[
\Sym^2(E)=\{ij:1\le i\le j\le8\},\qquad
|\Sym^2(E)|=\binom92=36.
\]
For $Y,Z\subseteq E$, put
$YZ:=\{yz:y\in Y,\ z\in Z\}\subseteq\Sym^2(E)$.
Repeated occurrences of the same pair are counted only once.
In particular, $YY=\Sym^2(Y)$.
We abbreviate $\{i\}Y$ to $iY$.
In particular, $iE=\{ij:j\in E\}$.
For $Y\subseteq E$, we write
\[
F_Y:=YE=\bigcup_{i\in Y}iE.
\]
\end{definition}

For example, $AB=\{13,14,23,24\}$ is a subset of $\Sym^2(E)$.
Also, $\Sym^2(D)=\{77,78,88\}$.

\subsection{Second symmetric powers of matroids}\label{sec:symmetric-power}

\begin{definition}
Let $L$ be a loopless matroid with ground set $E_L$ and rank function
$\rho_L$. A matroid $N$ on $\Sym^2(E_L)$ is a \emph{second symmetric
quasi-power} of $L$ if, for every $i\in E_L$, the map
\[
E_L\longrightarrow iE_L,\qquad j\longmapsto ij
\]
induces an isomorphism $L\cong N|_{iE_L}$.
If, in addition,
\[
\rho_N(\Sym^2(E_L))=\binom{\rho_L(E_L)+1}{2},
\]
then $N$ is a \emph{second symmetric power} of $L$.
We consider only loopless matroids in this paper.
\end{definition}

These are Anderson's definitions of symmetric quasi-powers and
symmetric powers~\cite[Definition~1.1]{Anderson}, specialized to $d=2$
and the loopless case. For $d=2$, the nontrivial multiplication maps in that
definition are precisely $j\mapsto ij$ for $i\in E_L$.
Requiring each of them to induce $L\cong N|_{iE_L}$ gives the
quasi-power condition above, and the symmetric-power rank condition
becomes $\rho_N(\Sym^2(E_L))=\binom{\rho_L(E_L)+1}{2}$.

The isomorphisms must be induced by these specified maps; abstract
isomorphisms alone do not suffice.
Consequently, if $N$ is a second symmetric power of $M=V_8$, then
\begin{equation}
\rho_N(iY)=\rho_M(Y)\quad(i\in E,\ Y\subseteq E),
\qquad \rho_N(\Sym^2(E))=10.
\label{eq:defpower}
\end{equation}

Anderson's Question~2.28 asks whether $V_8$ has a second symmetric
power~\cite[Question~2.28]{Anderson}.
We prove Theorem~\ref{thm:main} by deriving the necessary rank conditions
from the basis property of symmetric powers and the isomorphisms
$M\cong N|_{iE}$.

\subsection{Rank lemmas}\label{sec:rank-lemmas}
From this subsection onward, suppose that $N$ is a second symmetric power of $M$. 
The rank identities in \eqref{eq:defpower} imply
\begin{equation}
x\in\cl_M(Y)\quad\Longrightarrow\quad ix\in\cl_N(iY).
\label{eq:transfer}
\end{equation}
Indeed, the hypothesis means that $\rho_M(Y\cup\{x\})=\rho_M(Y)$,
so \eqref{eq:defpower} gives
$\rho_N(iY\cup\{ix\})=\rho_N(iY)$.

The next lemma derives two rank formulas from the basis property of
symmetric powers and the defining rank identities \eqref{eq:defpower}.

\begin{lemma}\label{lem:ranks}
Let $Y\subseteq E$. Then
\[
\rho_N(\Sym^2(Y))=\binom{\rho_M(Y)+1}{2},\qquad
\rho_N(F_Y)=4\rho_M(Y)-\binom{\rho_M(Y)}{2}.
\]
\end{lemma}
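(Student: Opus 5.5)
The plan is to show that for a basis $B$ of $M$, the set $\Sym^2(B)$ is a basis of $N$, and then to exhibit explicit subsets of $\Sym^2(B)$ that span $\Sym^2(Y)$ and $F_Y$ respectively. Fix $Y\subseteq E$ and put $r=\rho_M(Y)$. Choose an independent set $I\subseteq Y$ with $|I|=r$ spanning $Y$, and extend it to a basis $B\supseteq I$ of $M$, so $|B|=4$. We have $Y\subseteq\cl_M(I)$ and $E=\cl_M(B)$.

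\emph{Spanning step.} We use \eqref{eq:transfer} repeatedly.

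First, $\Sym^2(E)\subseteq\cl_N(\Sym^2(B))$. For $b\in B$, \eqref{eq:transfer} gives $bE\subseteq\cl_N(bB)\subseteq\cl_N(\Sym^2(B))$. Hence $xB=\{xb:b\in B\}\subseteq\cl_N(\Sym^2(B))$ for every $x\in E$, because $xb=bx\in bE$. Applying \eqref{eq:transfer} again, $xE\subseteq\cl_N(xB)\subseteq\cl_N(\Sym^2(B))$.

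The same two-stage argument gives the other two spans we need:
\begin{itemize}
\item $\Sym^2(Y)\subseteq\cl_N(\Sym^2(I))$. For $i\in I$ we get $iY\subseteq\cl_N(iI)$. For $y\in Y$ this gives $yI\subseteq\cl_N(\Sym^2(I))$, and then $yY\subseteq\cl_N(yI)$.
\item $F_Y\subseteq\cl_N(IB)$. For $b\in B$ we get $bY\subseteq\cl_N(bI)\subseteq\cl_N(IB)$. For $y\in Y$ this gives $yB\subseteq\cl_N(IB)$, and then $yE\subseteq\cl_N(yB)$.
\end{itemize}

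\emph{Independence step.} By \eqref{eq:defpower}, $\rho_N(\Sym^2(B))=\rho_N(\Sym^2(E))=10$. Since $|\Sym^2(B)|=\binom{5}{2}=10$, the set $\Sym^2(B)$ is independent in $N$ and hence a basis of $N$. Its subsets $\Sym^2(I)$ and $IB$ are therefore independent. Each of them spans the relevant set by the spanning step, so by the final remark of Section~\ref{sec:preliminaries},
\[
\rho_N(\Sym^2(Y))=|\Sym^2(I)|=\binom{r+1}{2},\qquad \rho_N(F_Y)=|IB|.
\]

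\emph{Counting step.} The set $IB$ consists of the pairs in $\Sym^2(B)$ not lying in $\Sym^2(B\setminus I)$. Hence
\[
|IB|=10-\binom{5-r}{2}=4r-\binom{r}{2}
\]
for $0\le r\le4$, which is a direct check of five cases.

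I do not expect a serious obstacle here. The one point requiring care is the spanning step, where the closure argument has to be organised in two stages as above: first span the pairs $xb$ from the ``rows'' indexed by elements of $B$ (or $I$), then use the row of $x$ itself. This works only because \eqref{eq:transfer} is available in every row $iE$, not just for $i\in B$.
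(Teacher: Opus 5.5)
Your proof is correct and follows the paper's argument. You choose $I$ spanning $Y$, extend it to a basis of $M$, and use \eqref{eq:transfer} in two stages to show that $\Sym^2(I)$ spans $\Sym^2(Y)$ and $IB$ spans $F_Y$; then you count.

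The one real difference is that the paper cites Anderson's Proposition~2.25 for the fact that $\Sym^2$ of a basis of $M$ is a basis of $N$. You derive this directly instead. The same two-stage closure argument shows that $\Sym^2(B)$ spans $\Sym^2(E)$, so $\rho_N(\Sym^2(B))=\rho_N(\Sym^2(E))=10=|\Sym^2(B)|$. This makes the lemma self-contained. It also shows that the symmetric-power condition enters only through the value $\rho_N(\Sym^2(E))=10$.

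Two small remarks. The equality $\rho_N(\Sym^2(B))=\rho_N(\Sym^2(E))$ comes from your spanning step, not from \eqref{eq:defpower} as your wording suggests. And your count $|IB|=10-\binom{5-r}{2}$ agrees with the paper's $\binom{r+1}{2}+r(4-r)$ as a polynomial identity in $r$, so the five-case check is unnecessary.
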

\begin{proof}
Choose a basis $I$ of $M|_Y$ and extend it to a basis $K$ of $M$.
By Anderson~\cite[Proposition~2.25]{Anderson}, $\Sym^2(K)$ is a basis
of $N$. Hence its subsets $\Sym^2(I)$ and $IK$ are independent.
Since $Y\subseteq\cl_M(I)$ and $E=\cl_M(K)$,
\eqref{eq:transfer}, together with monotonicity and idempotence of closure,
gives
\[
\begin{aligned}
\Sym^2(Y)&\subseteq\cl_N(IY)\subseteq\cl_N(\Sym^2(I)),\\
F_Y&\subseteq\cl_N(IE)\subseteq\cl_N(IK).
\end{aligned}
\]
Thus $\Sym^2(I)$ spans $\Sym^2(Y)$, and $IK$ spans $F_Y$.
Since $|I|=\rho_M(Y)$, $|K|=4$, and $IK$ is the disjoint union of
$\Sym^2(I)$ and $I(K\setminus I)$, we obtain
\begin{align*}
\rho_N(\Sym^2(Y))&=|\Sym^2(I)|=\binom{\rho_M(Y)+1}{2},\\
\rho_N(F_Y)&=|IK|=\binom{\rho_M(Y)+1}{2}+\rho_M(Y)(4-\rho_M(Y))=4\rho_M(Y)-\binom{\rho_M(Y)}{2}.
\end{align*}
\end{proof}

\begin{lemma}\label{lem:rows}
For distinct $x,y\in E$ and any $Y\subseteq E$,
\begin{equation}
\rho_N(xY\cup yY)\ge
\rho_M(Y)+\rho_M(Y\cup\{x\})-1.
\label{eq:rows}
\end{equation}
\end{lemma}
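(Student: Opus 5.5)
The plan is to bound $\rho_N(xY\cup yY)$ from below by comparing it with the union of the two full rows $xE\cup yE$. That union has large rank by Lemma~\ref{lem:ranks}, and submodularity controls how much rank the two rows can add to $P:=xY\cup yY$. The point of the argument is to add the row $xE$ first and the row $yE$ second. By the time $yE$ is added, it already meets the set in $y(Y\cup\{x\})$: the pair $xy=yx$ lies in $xE$. This is what produces the term $\rho_M(Y\cup\{x\})$ rather than a second copy of $\rho_M(Y)$.

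First, apply Lemma~\ref{lem:ranks} to the two-element set $\{x,y\}$. Since $V_8$ is simple and $x\ne y$, we have $\rho_M(\{x,y\})=2$, so
\[
\rho_N(xE\cup yE)=\rho_N(F_{\{x,y\}})=4\cdot 2-\binom22=7.
\]

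Second, use submodularity in the form $\rho_N(S\cup R)\le \rho_N(S)+\rho_N(R)-\rho_N(S\cap R)$, together with monotonicity, twice:
\begin{itemize}
\item Take $S=P$ and $R=xE$. Here $S\cap R\supseteq xY$, and $\rho_N(xY)=\rho_M(Y)$ and $\rho_N(xE)=\rho_M(E)=4$ by \eqref{eq:defpower}. Hence
\[
\rho_N(P\cup xE)\le\rho_N(P)+4-\rho_M(Y).
\]
\item Take $S=P\cup xE$ and $R=yE$. Now $S\cap R\supseteq yY\cup\{yx\}=y(Y\cup\{x\})$, because $yx=xy\in xE$. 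By \eqref{eq:defpower} this set has rank $\rho_M(Y\cup\{x\})$, and $\rho_N(yE)=4$. Hence
\[
\rho_N(xE\cup yE)\le \rho_N(P\cup xE)+4-\rho_M(Y\cup\{x\}).
\]
\end{itemize}

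Combining the two inequalities gives
\[
7\le \rho_N(P)+8-\rho_M(Y)-\rho_M(Y\cup\{x\}),
\]
which rearranges to \eqref{eq:rows}.

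There is no real obstacle. The only points needing care are to use containment, not equality, for the intersections (the extra elements $xy$ when $y\in Y$, or $yx$ when $x\in Y$, only help), and to add the rows in the order described so that the shared pair $xy$ gets counted.
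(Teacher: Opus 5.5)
Your proof is correct and is essentially the paper's argument: both apply submodularity to $xY\cup yY$ and $xE$, and then to $xE\cup yY$ and $yE$, using $\rho_N(F_{\{x,y\}})=7$ and the fact that the intersection in the second step contains $y(Y\cup\{x\})$ because $xy\in xE$. The paper also proves the reverse inequality $\rho_N(xE\cup yY)\le 3+\rho_M(Y\cup\{x\})$, which the lemma does not need, so your version is a slightly streamlined form of the same proof.
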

\begin{proof}
We first establish the auxiliary equality
\[
\rho_N(xE\cup yY)=3+\rho_M(Y\cup\{x\}).
\]
Since $M$ is simple and $x\ne y$, 
applying Lemma~\ref{lem:ranks} to $\{x\}$, $\{y\}$, and $\{x,y\}$ gives
\[
\rho_N(xE)=\rho_N(yE)=4,\qquad
\rho_N(F_{\{x, y\}})=7.
\]
Because  $xE\cap yE=\{xy\}$ and $yY\subseteq yE$,
\[
(xE\cup yY)\cap yE=y(Y\cup\{x\}),\qquad
(xE\cup yY)\cup yE=F_{\{x, y\}}.
\]
Submodularity and \eqref{eq:defpower} therefore imply
\begin{align*}
\rho_N(xE\cup yY)+4
&\ge \rho_N(F_{\{x, y\}})+\rho_N\bigl(y(Y\cup\{x\})\bigr)\\
&=7+\rho_M(Y\cup\{x\}).
\end{align*}
Hence $\rho_N(xE\cup yY)\ge3+\rho_M(Y\cup\{x\})$.

For the reverse inequality, apply submodularity to $xE$ and
$y(Y\cup\{x\})$. Their union is $xE\cup yY$, since $xy\in xE$,
and their intersection is $\{xy\}$.
The singleton $\{xy\}$ has rank $\rho_M(\{y\})=1$ by the
isomorphism $M\cong N|_{xE}$. Thus
\[
4+\rho_M(Y\cup\{x\})\ge\rho_N(xE\cup yY)+1.
\]
This proves the auxiliary identity.

We now prove \eqref{eq:rows}.
The sets $xY\cup yY$ and $xE$ satisfy
\[
(xY\cup yY)\cup xE=xE\cup yY,\qquad
xY\subseteq(xY\cup yY)\cap xE.
\]
By monotonicity and \eqref{eq:defpower}, the intersection has rank
at least $\rho_M(Y)$. Submodularity consequently gives
\[
\rho_N(xY\cup yY)+4\ge\rho_N(xE\cup yY)+\rho_M(Y).
\]
Substituting the equality already proved yields
\[
\rho_N(xY\cup yY)\ge
3+\rho_M(Y\cup\{x\})+\rho_M(Y)-4
=\rho_M(Y)+\rho_M(Y\cup\{x\})-1,
\]
as required.
\end{proof}

\begin{lemma}\label{lem:products}
We have
\[
\begin{aligned}
\rho_N(A(B\cup C))&=6,&
\rho_N(C(A\cup B))&=6,\\
\rho_N(BD)&=4,&
\rho_N(A(A\cup D))&=5.
\end{aligned}
\]
\end{lemma}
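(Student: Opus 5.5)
Each of the four equalities will follow from a matching upper bound, obtained from submodularity applied to two rows $xY$ and $yY$, and lower bound, obtained from Lemma~\ref{lem:rows}. In every case the set in question has the form $xY\cup yY$ for a block $\{x,y\}$ and a suitable $Y\subseteq E$. The ranks of the rows are $\rho_N(xY)=\rho_N(yY)=\rho_M(Y)$ by \eqref{eq:defpower}. Their intersection is $\{xy\}$ if $x,y\in Y$, and empty otherwise. A single pair $xy$ has rank $1$, again by \eqref{eq:defpower}, since $M$ is loopless.

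For $A(B\cup C)$ take $x=1$, $y=2$, $Y=B\cup C$. Since $B\cup C\in\mathcal H$, we have $\rho_M(Y)=3$. The set $Y\cup\{1\}$ has five elements and rank $4$. Lemma~\ref{lem:rows} gives $\rho_N\ge 3+4-1=6$. Since $1Y\cap 2Y=\emptyset$, submodularity gives $\rho_N\le 3+3=6$. The case $C(A\cup B)$ is identical with $x=5$, $y=6$, $Y=A\cup B\in\mathcal H$.

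For $BD$ take $x=3$, $y=4$, $Y=D$. Then $\rho_M(D)=2$, and $\rho_M(D\cup\{3\})=3$ because every three-element set is independent. Lemma~\ref{lem:rows} gives $\rho_N(BD)\ge 2+3-1=4$, and submodularity with disjoint rows gives $\rho_N(BD)\le 2+2=4$.

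For $A(A\cup D)$ take $x=1$, $y=2$, $Y=A\cup D\in\mathcal H$, so $\rho_M(Y)=3$. Here $Y\cup\{1\}=Y$, and Lemma~\ref{lem:rows} yields the lower bound $3+3-1=5$. For the upper bound, the rows $1Y$ and $2Y$ meet in $\{12\}$, which has rank $1$. Submodularity therefore gives $\rho_N\le 3+3-1=5$.

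There is no real obstacle here. The only points needing care are checking which four-element sets lie in $\mathcal H$, and correctly identifying the row intersections in the last case.
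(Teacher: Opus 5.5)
Your proof is correct and follows the paper's argument: in each case, Lemma~\ref{lem:rows} gives the lower bound and submodularity of the two rows gives the upper bound, with the intersection $\{xy\}$ of rank one in the case $A(A\cup D)$. The only difference is cosmetic. For $BD$ you split along $B$ with $Y=D$, whereas the paper splits along $D$ with $Y=B$, and this is immaterial since $BD=DB$.
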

\begin{proof}
First, write $A=\{x,y\}$ and put $Y=B\cup C$, so that
$A(B\cup C)=xY\cup yY$.
Since $Y\in\mathcal H$ is a rank-three hyperplane and $x\notin Y$,
\[
\rho_M(Y)=3,\qquad \rho_M(Y\cup\{x\})=4.
\]
The inequality in \eqref{eq:rows} gives
$\rho_N(A(B\cup C))\ge3+4-1=6$.
On the other hand, \eqref{eq:defpower} gives
$\rho_N(xY)=\rho_N(yY)=3$.
Submodularity implies subadditivity of rank, so
\[
\rho_N(xY\cup yY)\le\rho_N(xY)+\rho_N(yY)=6.
\]
Thus $\rho_N(A(B\cup C))=6$.

Next, write $C=\{x,y\}$ and put $Y=A\cup B$.
Again, $Y$ is a rank-three hyperplane and $x\notin Y$, so
$\rho_M(Y)=3$ and $\rho_M(Y\cup\{x\})=4$.
The inequality in \eqref{eq:rows} gives
$\rho_N(C(A\cup B))\ge6$.
Each of $xY$ and $yY$ has rank three, so subadditivity gives the
reverse inequality. Hence $\rho_N(C(A\cup B))=6$.

For $BD$, write $D=\{x,y\}$ and take $Y=B$.
The set $B$ is independent of rank two, and $B\cup\{x\}$ is independent
of rank three. Thus \eqref{eq:rows} and subadditivity give
\[
4=2+3-1
\le\rho_N(xB\cup yB)
\le\rho_N(xB)+\rho_N(yB)=4.
\]
Since $BD=xB\cup yB$, its rank is four.

Finally, write $A=\{x,y\}$ and put $Y=A\cup D$.
We have $\rho_M(Y)=3$ and $x\in Y$, so the inequality in
\eqref{eq:rows} gives
$\rho_N(A(A\cup D))\ge3+3-1=5$.
For the upper bound, note that $\rho_N(xY)=\rho_N(yY)=3$ and
$xY\cap yY=\{xy\}$, since $x,y\in Y$.
This singleton has rank one. Submodularity therefore yields
\[
\rho_N(A(A\cup D))
=\rho_N(xY\cup yY)
\le3+3-1=5.
\]
Combining the two bounds proves the last equality.
\end{proof}

\subsection{Proof of the main theorem}\label{sec:main-proof-core}
We use the notation introduced above.
The proof is based on the following subset of $\Sym^2(E)$:
\[
X:=AB\cup AC\cup\Sym^2(D).
\]

\begin{proof}[Proof of Theorem~\ref{thm:main}]
We will obtain a contradiction by proving that
\[
\rho_N(X)\ge8
\qquad\text{and}\qquad
\rho_N(X)\le7.
\]

\medskip\noindent
\textbf{Step 1: Two twelve-element sets $T$ and $U$ have rank at most seven.}\\
Put
\[
T:=AB\cup AC\cup BC,\qquad U:=AB\cup AC\cup BD.
\]
Consider the three intersections
\begin{align*}
I_1&:=F_{A\cup B}\cap F_{A\cup C}=F_A\cup BC,\\
I_2&:=F_{A\cup B}\cap F_{B\cup C}=F_B\cup AC,\\
I_3&:=F_{A\cup B}\cap F_{A\cup D}=F_A\cup BD.
\end{align*}
For example, a pair lies in both $F_{A\cup B}$ and $F_{A\cup C}$
precisely when at least one of its endpoints lies in $A$, or one endpoint
lies in $B$ and the other in $C$.
This proves the identity for $I_1$; the other two identities follow
in the same way.

The sets $A\cup B$, $A\cup C$, $B\cup C$, and $A\cup D$ all belong
to $\mathcal H$ and have rank three.
By Lemma~\ref{lem:ranks},
\[
\rho_N(F_{A\cup B})=\rho_N(F_{A\cup C})
=\rho_N(F_{B\cup C})=\rho_N(F_{A\cup D})=9.
\]
The sets $A\cup B\cup C$ and $A\cup B\cup D$ have rank four, so
\[
\rho_N(F_{A\cup B\cup C})
=\rho_N(F_{A\cup B\cup D})=10.
\]
Since $F_{A\cup B}\cup F_{A\cup C}=F_{A\cup B\cup C}$,
submodularity gives
\[
9+9\ge10+\rho_N(I_1),
\]
and hence $\rho_N(I_1)\le8$.
The identities
\[
F_{A\cup B}\cup F_{B\cup C}=F_{A\cup B\cup C},
\qquad
F_{A\cup B}\cup F_{A\cup D}=F_{A\cup B\cup D}
\]
give the same bound for $I_2$ and $I_3$. Thus
\[
\rho_N(I_j)\le8\qquad(j=1,2,3).
\]

To bound the rank of $T$, observe that
\[
I_1\cup I_2=F_{A\cup B},\qquad I_1\cap I_2=T.
\]
Applying submodularity to $I_1$ and $I_2$ gives
\[
\rho_N(I_1)+\rho_N(I_2)\ge9+\rho_N(T),
\]
so $\rho_N(T)\le8+8-9=7$.
Similarly,
\[
I_2\cup I_3=F_{A\cup B},\qquad I_2\cap I_3=U
\]
imply $\rho_N(U)\le7$. We have therefore proved
\begin{equation}
\rho_N(T)\le7,\qquad \rho_N(U)\le7.
\label{eq:TU}
\end{equation}

\medskip\noindent
\textbf{Step 2: An eleven-element set $X$ has rank at least eight.}\\
Lemmas~\ref{lem:products} and~\ref{lem:ranks} give
\[
\rho_N(AC\cup BC)=\rho_N(C(A\cup B))=6,\qquad
\rho_N(F_C)=7.
\]
Since $C\cup D$ is a basis of $M$, Lemma~\ref{lem:ranks} also gives
$\rho_N(\Sym^2(C\cup D))=10$.
Every pair in $\Sym^2(C\cup D)$ either has an endpoint in $C$
or has both endpoints in $D$. Consequently,
\[
\Sym^2(C\cup D)\subseteq F_C\cup\Sym^2(D).
\]
Since $N$ has rank ten, monotonicity gives
\[
10\le\rho_N(F_C\cup\Sym^2(D))\le10,
\]
so $\rho_N(F_C\cup\Sym^2(D))=10$.

We have $AC\cup BC\subseteq F_C$ and
$F_C\cap\Sym^2(D)=\varnothing$, since $C\cap D=\varnothing$.
Hence
\begin{align*}
F_C\cap\bigl((AC\cup BC)\cup\Sym^2(D)\bigr)&=AC\cup BC,\\
F_C\cup\bigl((AC\cup BC)\cup\Sym^2(D)\bigr)&=F_C\cup\Sym^2(D).
\end{align*}
Applying submodularity to $F_C$ and $(AC\cup BC)\cup\Sym^2(D)$ gives
\[
7+\rho_N\bigl((AC\cup BC)\cup\Sym^2(D)\bigr)\ge10+6.
\]
Thus $\rho_N((AC\cup BC)\cup\Sym^2(D))\ge9$.
As $AC\cup BC\subseteq T$, monotonicity yields
\begin{equation}
\rho_N(T\cup\Sym^2(D))\ge9.
\label{eq:TSD-lower}
\end{equation}

Because the four blocks are pairwise disjoint,
\[
X\cap T=AB\cup AC=A(B\cup C),\qquad
X\cup T=T\cup\Sym^2(D).
\]
Lemma~\ref{lem:products} gives $\rho_N(X\cap T)=6$.
Submodularity, together with \eqref{eq:TU} and \eqref{eq:TSD-lower},
now gives
\begin{equation}
\begin{aligned}
\rho_N(X)
&\ge\rho_N(X\cup T)+\rho_N(X\cap T)-\rho_N(T)\\
&\ge9+6-7=8.
\end{aligned}
\label{eq:lower}
\end{equation}

\medskip\noindent
\textbf{Step 3: The same set $X$ has rank at most seven.}\\
We first bound the rank of $F_A\cup\Sym^2(D)$.
Since $\Sym^2(A)$ and $AD$ are contained in $F_A$,
\[
\begin{aligned}
F_A\cup\Sym^2(A\cup D)&=F_A\cup\Sym^2(D),\\
F_A\cap\Sym^2(A\cup D)&=A(A\cup D).
\end{aligned}
\]
As $\rho_M(A)=2$ and $\rho_M(A\cup D)=3$,
Lemma~\ref{lem:ranks} gives
\[
\rho_N(F_A)=7,\qquad \rho_N(\Sym^2(A\cup D))=6.
\]
Lemma~\ref{lem:products} gives $\rho_N(A(A\cup D))=5$.
Submodularity therefore implies
$7+6\ge\rho_N(F_A\cup\Sym^2(D))+5$, and hence
\begin{equation}
\rho_N(F_A\cup\Sym^2(D))\le8.
\label{eq:FA-SD-upper}
\end{equation}

We next bound the rank of $U\cup\Sym^2(B\cup D)$.
Since $B\cup D\in\mathcal H$, Lemma~\ref{lem:ranks} gives
$\rho_N(\Sym^2(B\cup D))=6$.
Moreover,
\[
U\cap\Sym^2(B\cup D)=BD.
\]
Indeed, among the pairs in $U=AB\cup AC\cup BD$, precisely those
in $BD$ have both endpoints in $B\cup D$.
By Lemma~\ref{lem:products}, $\rho_N(BD)=4$.
Submodularity and \eqref{eq:TU} give
\[
\rho_N\bigl(U\cup\Sym^2(B\cup D)\bigr)
\le\rho_N(U)+6-4\le7+2,
\]
so
\begin{equation}
\rho_N\bigl(U\cup\Sym^2(B\cup D)\bigr)\le9.
\label{eq:U-SBD-upper}
\end{equation}

We now apply submodularity to $F_A\cup\Sym^2(D)$ and
$U\cup\Sym^2(B\cup D)$. We first verify that their intersection is \(X\). Every pair in $AB\cup AC$ belongs to $F_A$.
On the other hand, every pair in $BD$ or $\Sym^2(B\cup D)$
has both endpoints in $B\cup D$.
Since $A\cap(B\cup D)=\varnothing$, none of these pairs belongs
to $F_A$. Therefore
\[
F_A\cap\bigl(U\cup\Sym^2(B\cup D)\bigr)=AB\cup AC.
\]
Since $\Sym^2(D)\subseteq\Sym^2(B\cup D)$, it follows that
\[
\bigl(F_A\cup\Sym^2(D)\bigr)
\cap\bigl(U\cup\Sym^2(B\cup D)\bigr)
=AB\cup AC\cup\Sym^2(D)=X.
\]
Their union is $F_A\cup\Sym^2(B\cup D)$, because
$AB\cup AC\subseteq F_A$ and
$BD\cup\Sym^2(D)\subseteq\Sym^2(B\cup D)$.
Moreover,
\[
\Sym^2(A\cup B\cup D)\subseteq F_A\cup\Sym^2(B\cup D).
\]
To see this, a pair with both endpoints in $A\cup B\cup D$
either has an endpoint in $A$, in which case it belongs to $F_A$,
or has both endpoints in $B\cup D$, in which case it belongs to
$\Sym^2(B\cup D)$.
Since $A\cup B\cup D$ has rank four, Lemma~\ref{lem:ranks} gives
$\rho_N(\Sym^2(A\cup B\cup D))=10$.
Since $N$ has rank ten, the preceding inclusion implies
\[
\rho_N\bigl(F_A\cup\Sym^2(B\cup D)\bigr)=10.
\]

Finally, submodularity, \eqref{eq:FA-SD-upper}, and
\eqref{eq:U-SBD-upper} yield
\[
\begin{aligned}
\rho_N(X)
&\le\rho_N\bigl(F_A\cup\Sym^2(D)\bigr)
   +\rho_N\bigl(U\cup\Sym^2(B\cup D)\bigr)\\
&\quad-\rho_N\bigl(F_A\cup\Sym^2(B\cup D)\bigr)\\
&\le8+9-10=7.
\end{aligned}
\]
Together with \eqref{eq:lower}, this gives
$8\le\rho_N(X)\le7$, a contradiction.
Hence $V_8$ has no second symmetric power.
\end{proof}

\noindent
\textsc{Yasuhito Nakajima}\\
Independent Scholar\\
Email: \texttt{yasuhito.nakajima.mathematics@gmail.com}
\end{document}